\newtheorem{lemma}{\bf Lemma}[section]
\newtheorem{corollary}[lemma]{\bf Corollary}
\newtheorem{proposition}[lemma]{\bf Proposition}
\newtheorem{definition}[lemma]{\bf Definition}
\begin{document}

\title{Problems 85 and 87 of Birkhoff's {\it lattice theory}}

\author{Jonathan David Farley}
\address{Morgan State University, Baltimore MD, United
States of America}
\email{\tt jonathan.farley@morgan.edu}
\author{Dominic van der Zypen}
\address{Federal Office of Social Insurance, CH-3003 Bern,
Switzerland}
\email{\tt dominic.zypen@gmail.com}

\subjclass[2010]{05A18, 06B23}
\keywords{Lattice theory, interval topology, breadth, Birkhoff}

\begin{abstract} 
We solve problems 85 and 87 from Birkhoff's book {\it Lattice Theory} \cite{Bir}.
\end{abstract}

\maketitle
\parindent = 0mm
\parskip = 2 mm
\section{Introduction}

A {\em partially ordered set} (or {\em poset} for short) 
is a set $X$ with a binary
relation $\leq$ that is reflexive, transitive, and
anti-symmetric (i.e., $x,y\in X$ with $x\leq y$ and $y\leq x$
implies $x=y$). Often, a poset is denoted by $(X,\leq)$. 
A subset $D\subseteq X$ is called a {\em down-set} if it is
``closed under going down'', that is $d\in D, x\in X, x\leq d$
jointly imply $x\in D$. A special case of a down-set
is the set $$\downarrow_P x = \{y\in X: y\leq x\}$$
for $x\in X$. (Sometimes we just write $\downarrow x$
if the poset $P$ is clear from the context.)
Down-sets of this form are called {\em principal}.
If $S\subseteq X$ we say $S$ has a {\em smallest element} $s_0\in S$
if $s_0\leq s$ for all $s\in S$. Note that anti-symmetry
of $\leq$ implies that a smallest element is unique (if it exists
at all!). Similarly, we define a largest element. Moreover,
we set $$S^u = \{x\in X: x\geq s \textrm{ for all } s \in S\}$$
to be the {\em set of upper bounds} of $S$. The set
of lower bounds $S^{\ell}$ is defined analogously.

We say that a subset $S\subseteq X$ of a poset $(X,\leq)$
has an {\em infimum} or {\em largest lower bound} if
\begin{enumerate}
\item $S^{\ell} \neq \emptyset$, and
\item $S^{\ell}$ has a largest element.
\end{enumerate}
Again, an infimum (if it exists) is unique by anti-symmetry of the 
ordering relation, and it is
denoted by $\textrm{inf}(S)$ or $\bigwedge_X S$. The dual
notion (everything taken ``upside down'' in the poset)
is called {\em supremum} and is denoted by $\textrm{sup}(S)$ 
or $\bigvee_X S$. The infimum of the empty set is defined
to be the largest element of $X$ if it has one, and
the supremum is the smallest element of $X$. 

A poset $(X,\leq)$
in which infima and suprema exist for all $S\subseteq X$ 
is called a {\em complete lattice}. A {\em lattice}
has suprema and infima for finite non-empty subsets.
If $(X,\leq)$ is a poset and $x,y\in X$ we use the 
following notation $$x\vee y := \bigvee_X\{x,y\},$$ and
$x\wedge y$ is defined analogously. To emphasize the
binary operations $\vee,\wedge$, a lattice $(L,\leq)$
is sometimes written as $(L,\vee,\wedge)$. A lattice
is {\em distributive} if for all $x,y,z\in L$ we have
$$x\wedge(y\vee z) = (x\vee y)\wedge(x\vee z).$$

\begin{definition}
Given a poset $(X,\leq)$, the {\em interval topology} 
$\tau_i(X)$ is
given by the subbase $${\mathcal S} = \{X\setminus 
(\downarrow x): x\in X\} \cup \{X\setminus (\uparrow x):
x\in X\}.$$
\end{definition}

Finally we give the notion of breadth of a lattice.
\begin{definition} Let $n\in\mathbb{N}$ be a positive
integer.
For a complete lattice $(L, \leq)$ we say that it has 
{\em breadth} $\leq n$ 
if for any finite set $F$ there is $A\subseteq F$ with $|A| \leq n$
such that $\inf(A) = \inf(F)$. We say $L$ has {\em finite breadth}
if there is a positive integer $n\in\mathbb{N}$ such that $L$
has breadth $\leq n$. Otherwise we say that $L$ has infinite
breadth.
\end{definition}

\section{Problem 85}
Here is the statement of this problem:
\begin{quote}
Is every complete morphism (i.e., for arbitrary joins and meets)
of complete lattices continuous with respect to star-convergence?
in the interval topology?
\end{quote}
For the notion of star-convergence, we have to introduce
some further notions. We start with the answer to the second part of the question, which
is about the interval topology.

\subsection{Interval topology}
\begin{proposition}
A complete homomorphism between complete lattices is continuous in the interval topology.
\end{proposition}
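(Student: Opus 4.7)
The plan is to verify continuity by showing that the preimage of every subbasic open set of $M$ is open in $L$, equivalently, that for each $y \in M$ both $f^{-1}(\downarrow y)$ and $f^{-1}(\uparrow y)$ are closed in the interval topology of $L$. Since principal down-sets and principal up-sets are precisely the complements of the subbase defining $\tau_i(L)$, they are (sub-basic) closed, so it suffices to show that each such preimage is itself a principal down-set or a principal up-set.

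First I would record that any complete homomorphism $f \colon L \to M$ is monotone: if $x \leq x'$ then $x \vee x' = x'$, hence $f(x) \vee f(x') = f(x')$, i.e.\ $f(x) \leq f(x')$. Next, fix $y \in M$ and set $S = f^{-1}(\downarrow y) = \{x \in L : f(x) \leq y\}$ and $z = \bigvee_L S$. Because $f$ preserves arbitrary joins, $f(z) = \bigvee_M f(S) \leq y$, so $z \in S$. Combined with monotonicity, this gives $S = \downarrow z$: the inclusion $S \subseteq \downarrow z$ is trivial, and if $x \leq z$ then $f(x) \leq f(z) \leq y$, so $x \in S$. Thus $f^{-1}(\downarrow y)$ is a principal down-set, hence closed in $\tau_i(L)$.

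The dual argument, using that $f$ preserves arbitrary meets, shows that $f^{-1}(\uparrow y)$ is a principal up-set, again closed in $\tau_i(L)$. Taking complements, $f^{-1}$ carries every member of the defining subbase of $\tau_i(M)$ to an open set of $\tau_i(L)$, so $f$ is continuous.

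I do not expect any real obstacle here: the proof is essentially the observation that the completeness of the homomorphism guarantees the existence of the element $z = \bigvee f^{-1}(\downarrow y)$ and its dual, which collapses the preimage of a principal down-set to a principal down-set. The only small point to be careful about is allowing $S = \emptyset$, in which case $z = \bigvee \emptyset$ is the bottom of $L$ (or the convention makes $\downarrow y$ trivially equal to $\emptyset$ after preimage), and dually for the top; this is handled by the convention stated in the introduction for infima and suprema of the empty set.
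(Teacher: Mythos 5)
Your proof is correct and is essentially the paper's argument: both show that, because $f$ preserves arbitrary joins and meets, the preimage of a subbasic closed set is again a principal down-set or up-set (the paper phrases this for whole intervals $[x,y]$, computing $a=\inf f^{-1}([x,y])$ and $b=\sup f^{-1}([x,y])$ and showing $f^{-1}([x,y])=[a,b]$, while you treat $\downarrow y$ and $\uparrow y$ separately via $z=\bigvee f^{-1}(\downarrow y)$). Your handling of the empty-preimage case matches the paper's explicit caveat, so there is nothing missing.
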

\begin{proof}
Let $L$ and $M$ be complete lattices and let 
$f:L\to M$ be a complete lattice homomorphism.  Then $f$ 
is order-preserving. Let $x,y \in M$ be such that 
$x\leq y$. Either $f^{-1}([x,y])$ is empty or else take 
$a=\inf(f^{-1}([x,y]))$ and $b=\sup(f^{-1}([x,y]))$.  Then $a\leq b$.  
Then $$f(a)=f(\inf(f^{-1}([x,y]))) = \inf(f( f^{-1}([x,y]))) \geq x,$$
and similarly $$f(b)= f(\sup(f^{-1}([x,y]))) = 
\sup(f( f^{-1}([x,y]))) \leq y,$$
and $f(a)\leq f(b)$ so $x\leq f(a) \leq f(b) \leq y$ and hence 
$f(a), f(b)$ are in $[x,y]$.

Let $c \in [a,b]$.  Then $x\leq f(a) \leq f(c) \leq f(b) \leq y$, 
so $[a,b]$ is a subset of $f^{-1}([x,y])$.  
But $f^{-1}([x,y])$ is a subset of $[a,b]$. Thus $f^{-1}([x,y]) = [a,b]$.

Hence $f^{-1}$ takes subbasic closed sets to subbasic closed sets or the empty set. 
Therefore $f$ is continuous when $L$ and $M$ have the interval topology.
\end{proof}
\subsection{Star-convergence}
For this part of the question we need the notion of 
order convergence expressed with filters (Birkhoff uses nets,
and filters offer an equivalent, but more concise approach to convergence
\cite{NetFil}).

Let $(P,\leq)$ be a poset. By a {\em set filter} 
$\mathcal{F}$ on $P$ we mean 
a collection of subsets of $P$ such that:
\begin{itemize}
\item[-] $\emptyset \notin \mathcal{F}$;
\item[-] $A, B\in \mathcal{F}$ implies $A\cap B\in \mathcal{F}$;
\item[-] $U\in \mathcal{F}$, $U'\subseteq P$ and $U'\supseteq U$  
implies $U'\in \mathcal{F}$.
\end{itemize}
If $\mathcal{F}$ is a set filter, then we set ${\mathcal F}^u = 
\bigcup\{F^u: F\in \mathcal{F}\}$ and define ${\mathcal F}^\ell$ similarly. 
For $x\in P$ and ${\mathcal F}$ a set filter on $P$ we write 
$${\mathcal F}\to x \textrm{ iff } \bigwedge\mathcal{F}^u = x = 
\bigvee \mathcal{F}^\ell$$ 
and say ${\mathcal F}$ {\em order-converges} to $x$.

If $\mathcal{B}$ is a collection of subsets of $P$ such that
\begin{itemize}
\item[-] $\emptyset\notin \mathcal{B}$,
\item[-] for $A, B\in\mathcal{B}$ there is $C\in\mathcal{B}$
with $C\subseteq A\cap B$,
\end{itemize}
then we call $\mathcal{B}$ a {\em filter base}. The {\em
filter generated by} $\mathcal{B}$ is the collection
of sets that contain some member of $\mathcal{B}$.

If $\mathcal{F}\subseteq\mathcal{G}$ are filters on $P$
we say that $\mathcal{G}$ is a 
{\em super-filter} of $\mathcal{F}$.

Finally, we say that a filter $\mathcal{F}$ {\em star-converges}
to $x\in P$ if for every super-filter $\mathcal{F}'$ of
$\mathcal{F}$ there is a super-filter $\mathcal{G}$ of
$\mathcal{F}'$ such that $\mathcal{F}\to x$.

If $X, Y$ are sets and $\mathcal{F}$ is a filter on $X$ 
then it is easy to verify that
$\mathcal{B}_f := \{f(F): F\in\mathcal{F}\}$ is
a filter base in $Y$. We define $f(\mathcal{F})$ to be
the filter generated by $\mathcal{B}_f$.

The positive answer to the star-convergence part of 
question 85 follows from the following two lemmas:
\begin{lemma}
Let $L, M$ be complete lattices and let $f:L\to M$ be
a complete lattice homomorphism. Suppose that $\mathcal{F}$
is a filter on $L$ and $x\in L$ such that $\mathcal{F}
\to x$. Then $f(\mathcal{F})\to f(x)$.
\end{lemma}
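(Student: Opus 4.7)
The plan is to verify the two equations $\bigwedge f(\mathcal{F})^u = f(x)$ and $\bigvee f(\mathcal{F})^\ell = f(x)$ directly, using only the hypothesis that $f$ preserves arbitrary meets and joins. I describe the meet side; the join side is obtained by dualizing the argument.

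First I would simplify both $\mathcal{F}^u$ and $f(\mathcal{F})^u$ in terms of suprema of sets. Since $L$ is complete, for each $F\in\mathcal{F}$ the set of upper bounds $F^u$ is precisely $\{y\in L: y\geq \bigvee F\}$, so $\bigwedge F^u = \bigvee F$. Writing $\mathcal{F}^u = \bigcup_{F\in\mathcal{F}} F^u$ and taking the infimum term by term, the hypothesis $\bigwedge \mathcal{F}^u = x$ becomes the compact identity
$\bigwedge_{F\in\mathcal{F}} \bigvee F = x.$

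Next I would carry out the analogous reduction for $f(\mathcal{F})^u$. By definition, $G\in f(\mathcal{F})$ iff $G\supseteq f(F)$ for some $F\in\mathcal{F}$; for any such $G$ one has $G^u \subseteq (f(F))^u$, and on the other hand $f(F)$ itself lies in $f(\mathcal{F})$. Hence $f(\mathcal{F})^u = \bigcup_{F\in\mathcal{F}} (f(F))^u$, and the same argument as above yields $\bigwedge f(\mathcal{F})^u = \bigwedge_{F\in\mathcal{F}} \bigvee f(F)$.

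Finally I would invoke that $f$ is a complete lattice homomorphism: for each $F$, $\bigvee f(F) = f(\bigvee F)$, and consequently
$\bigwedge_{F\in\mathcal{F}} \bigvee f(F) = \bigwedge_{F\in\mathcal{F}} f(\bigvee F) = f\Bigl(\bigwedge_{F\in\mathcal{F}} \bigvee F\Bigr) = f(x).$
The dual computation gives $\bigvee f(\mathcal{F})^\ell = f(x)$, and together these say $f(\mathcal{F})\to f(x)$. I do not expect a real obstacle: the only point requiring care is the equality $f(\mathcal{F})^u = \bigcup_{F\in\mathcal{F}} (f(F))^u$, which holds because enlarging a set can only shrink its set of upper bounds, so passing from the filter base $\{f(F)\}$ to the generated filter does not introduce any new upper bounds.
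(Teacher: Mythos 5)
Your proof is correct, but it takes a genuinely different route from the paper's. The paper invokes Fact 1.1(1) of \cite{DZ} --- the equivalence $y\in\mathcal{F}^u \Leftrightarrow\ \downarrow y\in\mathcal{F}$ --- deduces $\downarrow_L x\in\mathcal{F}$ from $\mathcal{F}\to x$, pushes this through $f$ to get $\downarrow_M f(x)\in f(\mathcal{F})$, and then appeals to the reverse implication. You instead compute the relevant bounds explicitly: $\bigwedge\mathcal{F}^u=\bigwedge_{F\in\mathcal{F}}\bigvee F$ (since $F^u=\{y: y\geq\bigvee F\}$ in a complete lattice and the infimum of a union is the infimum of the infima), the analogous identity for $f(\mathcal{F})^u$ via the filter-base observation, and then two applications of the hypothesis that $f$ preserves arbitrary joins and meets. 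Your version is self-contained, needing no external fact, and it delivers the full equality $\bigwedge f(\mathcal{F})^u=f(x)$ in a single chain. It is in fact arguably tighter than the paper's argument: the step ``$\mathcal{F}\to x$ gives $\downarrow_L x\in\mathcal{F}$'' presupposes $x\in\mathcal{F}^u$, i.e.\ that the infimum $\bigwedge_{F\in\mathcal{F}}\bigvee F$ is attained by some $\bigvee F_0$ with $F_0\in\mathcal{F}$, which the definition of order convergence does not by itself guarantee; and $\downarrow_M f(x)\in f(\mathcal{F})$ on its own only yields the inequality $\bigwedge f(\mathcal{F})^u\leq f(x)$. Your computation $\bigwedge f(\mathcal{F})^u=\bigwedge_{F\in\mathcal{F}} f(\bigvee F)=f\bigl(\bigwedge_{F\in\mathcal{F}}\bigvee F\bigr)=f(x)$ sidesteps both points. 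The two places requiring care --- the use of completeness of $L$ to write $F^u$ as a principal up-set, and the identity $f(\mathcal{F})^u=\bigcup_{F\in\mathcal{F}}(f(F))^u$ --- are both properly justified in your write-up, and the dualization for the lower bounds is routine.
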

\begin{proof}
We prove that $\bigwedge_M f(\mathcal{F})^u = f(x)$.

The tool we use is Fact 1.1(1) from \cite{DZ}, which states that
$$ x\in \mathcal{F}^u \Leftrightarrow \downarrow x \in \mathcal{F}.$$
So assuming $\mathcal{F}\to x$ in the lattice $L$, we get $\downarrow_L x  
\in \mathcal{F}$. Therefore
$$f(\downarrow_L x) \in \mathcal{B}_f.$$ Since
$f$ is order-preserving, we get $$\downarrow_M f(x) \supseteq
f(\downarrow_L x),$$ which implies $\downarrow_M f(x)\in f(\mathcal{F})$
because $\mathcal{B}_f$ is a filter base for $f(\mathcal{F})$.
Using the other direction of the equivalence
stated above, we get $\bigwedge(f(\mathcal{F}))^u = f(x)$.

Similarly we prove that $\bigvee (f(\mathcal{F}))^\ell = f(x)$, which
implies that $f(\mathcal{F})\to f(x)$.
\end{proof}

\begin{lemma}
If $\mathcal{G} \supseteq \mathcal{F}$ are filters
on a set $X$ and $f:X\to Y$ is any map, then
$f(\mathcal{G}) \supseteq f(\mathcal{F})$.
\end{lemma}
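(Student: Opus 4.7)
The plan is to unwind both $f(\mathcal{F})$ and $f(\mathcal{G})$ to their defining filter bases and then match elements. Recall from the definitions above that $f(\mathcal{F})$ is the filter generated by $\mathcal{B}_f = \{f(F) : F \in \mathcal{F}\}$, so a set $U \subseteq Y$ lies in $f(\mathcal{F})$ precisely when there exists $F \in \mathcal{F}$ with $f(F) \subseteq U$; the analogous characterization holds for $f(\mathcal{G})$ in terms of images of members of $\mathcal{G}$.

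With this in hand, I would fix an arbitrary $U \in f(\mathcal{F})$ and produce some $F \in \mathcal{F}$ with $f(F) \subseteq U$. Since $\mathcal{F} \subseteq \mathcal{G}$, the set $F$ also belongs to $\mathcal{G}$, and hence $f(F)$ belongs to the generating base for $f(\mathcal{G})$. Because $f(F) \subseteq U$ and filters are upward closed under set inclusion, we conclude $U \in f(\mathcal{G})$. As $U$ was arbitrary, this yields $f(\mathcal{F}) \subseteq f(\mathcal{G})$, which is the desired super-filter relation.

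There is essentially no obstacle here: the claim is a straightforward monotonicity property of the ``filter generated by a base'' operation, applied to the obvious inclusion of generating bases $\{f(F) : F \in \mathcal{F}\} \subseteq \{f(G) : G \in \mathcal{G}\}$. The only subtlety worth flagging in the write-up is to distinguish members of the base $\mathcal{B}_f$ from members of the generated filter, which is handled by the single intermediate step $f(F) \subseteq U$.
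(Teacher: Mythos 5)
Your proof is correct. Note that the paper actually states this lemma without any proof at all, evidently regarding it as immediate, so there is no authorial argument to compare against; your write-up supplies exactly the routine verification that is being omitted. The one step worth making explicit, which you do handle, is the characterization $U \in f(\mathcal{F})$ if and only if $f(F) \subseteq U$ for some $F \in \mathcal{F}$ (coming from the definition of the filter generated by the base $\mathcal{B}_f$); once that is in place, the inclusion $\mathcal{F} \subseteq \mathcal{G}$ transfers directly to the generated filters, as you argue.
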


\section{Problem 87}
Here is the statement of this problem:
\begin{quote}
Can a lattice of infinite breadth be a Hausdorff lattice
in its interval topology?
\end{quote}
We will show that $2^\omega$ is such an example. (We order 
$2 = \{0,1\}$ by $0<1$ and set $2^\omega$ to be the set of
all functions $f:\omega \to 2$, ordered pointwise.)

First, we look at the interval topology of $2^\omega$.

\begin{lemma}
Let $(P_k)_{k\in K}$ be a family of posets.
The interval topology $\tau_i = \tau_{i}(\prod_{k\in K}P_k))$ 
on $P=\prod_{k\in K} P_k$ equals the 
product topology $\tau_p$ of the topological spaces 
$(P_k, \tau_{i}(P_k))$.
\end{lemma}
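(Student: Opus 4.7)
The strategy is to prove the two inclusions $\tau_i \subseteq \tau_p$ and $\tau_p \subseteq \tau_i$ separately, chasing subbasic sets in each direction and using throughout that the order on $P$ is coordinatewise, so that principal up- and down-sets of $P$ factor as products of principal up- and down-sets in the factors. The inclusion $\tau_i \subseteq \tau_p$ is the easy direction: each subbasic closed set of $\tau_i$ is either $\uparrow_P x = \prod_{k\in K} \uparrow_{P_k} x_k$ or $\downarrow_P x = \prod_{k\in K} \downarrow_{P_k} x_k$ for some $x \in P$; each factor is subbasic closed in the respective $\tau_i(P_k)$, and a product of closed sets in the factors is closed in the product topology, so every subbasic closed set of $\tau_i$ is closed in $\tau_p$.

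For the reverse inclusion $\tau_p \subseteq \tau_i$, I would show that every subbasic open of $\tau_p$, which has the form $\pi_k^{-1}(V)$ with $V$ subbasic open in $\tau_i(P_k)$, is open in $\tau_i$. Up to duality one may take $V = P_k \setminus \downarrow_{P_k} y$, and it suffices to show that the complement $\pi_k^{-1}(\downarrow_{P_k} y) = \{f \in P : f_k \leq y\}$ is closed in $\tau_i(P)$. For the application $P = 2^\omega$, each factor $\{0,1\}$ is bounded, so this set equals the principal down-set $\downarrow_P \tilde{y}$, where $\tilde{y}_k = y$ and $\tilde{y}_j = 1$ for $j \neq k$; as a principal down-set it is subbasic closed in $\tau_i(P)$. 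The dual subbasic open $\pi_k^{-1}(P_k \setminus \uparrow_{P_k} y)$ is handled symmetrically using the bottom element $0$ of each factor.

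The main obstacle is this second inclusion in full generality. Without top and bottom elements on each factor, the down-set $\{f \in P : f_k \leq y\}$ is non-principal and cannot be realized as a subbasic closed set of $\tau_i(P)$ in a single stroke; a more delicate argument would be needed, expressing this set as an intersection of unions of principal up- and down-sets of $P$. For the $2^\omega$ application the bounded-factor trick above suffices, so this difficulty is sidestepped and the equality $\tau_i = \tau_p$ follows.
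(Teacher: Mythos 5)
Your first inclusion is exactly the paper's: a principal up- or down-set of $P$ factors coordinatewise into principal up- or down-sets, hence is a product of closed sets and is closed in $\tau_p$. It is in the reverse inclusion that you diverge, and your worry about the general case is well founded --- in fact better founded than you suggest. The paper claims that $\pi_j^{-1}(P_j\setminus\uparrow x^*)$ equals the union of the sets $P\setminus\uparrow(z_k)_{k\in K}$ over all $(z_k)_{k\in K}$ with $z_j=x^*$; but that union consists of all $f$ failing to dominate \emph{some} coordinate of \emph{some} such $z$, which is in general strictly larger than $\{f: f_j\not\geq x^*\}$ (already in $2\times 2$ with $j$ the first coordinate and $x^*=1$, the point $(1,0)$ lies in $P\setminus\uparrow(1,1)$ but not in $\pi_j^{-1}(2\setminus\uparrow 1)$; the correct identity uses an intersection, which does not yield an open set). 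Your bounded-factor argument --- realizing $\pi_k^{-1}(\downarrow_{P_k}y)$ as the principal down-set $\downarrow_P\tilde y$ by filling the other coordinates with top elements, and dually with bottom elements --- is a correct repair and is all that the application to $2^\omega$ requires. The ``more delicate argument'' you hope for in the unbounded case does not exist, because the lemma as stated is false: in $\mathbb{Z}\times\mathbb{Z}$, where each factor carries the discrete interval topology, the set $C=\{(a,b): a\geq 5\}$ is closed in $\tau_p$, yet any single principal up-set or down-set of $\mathbb{Z}^2$ contains only finitely many of the points $(5+n,-n)$, $n\in\mathbb{N}$, so no finite union of such sets contains $C$, and the $\tau_i$-closure of $C$ is all of $\mathbb{Z}^2$. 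So you should state the lemma for factors with top and bottom (or just for $2^\omega$); with that hypothesis your proof is complete, and the genuine gap lies in the paper's treatment of the second inclusion rather than in yours.
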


\begin{proof} Take a subbasic element of $U\in\tau_i$ and show that 
it is a member of $\tau_p$. Without loss of generality we let 
$U = P\setminus (\uparrow(x_k)_{k\in K})$ where $x_k\in P_k$. 
Note that $\uparrow(x_k)_{k\in K}$ is a product of closed sets 
in the spaces $(P_k, \tau_{i}(P_k))$, therefore it is 
closed in the product topology, so $U\in \tau_p$. 

Conversely, for some $j\in K$ 
let $U = \pi_j^{-1}(U_j)$ be subbasic in $\tau_p$
where $\pi_j: P\to P_j$ is the projection map and 
$U_j = P_j\setminus \uparrow x^*$ for some $x^*\in P_j$. 
Then $$U = \bigcup \{P\setminus (\uparrow (z_k)_{k\in K}): 
(z_k)_{k\in K} \in P \text{ and } z_j =x^*\}.$$
So $U\in\tau_i$.
\end{proof}

\begin{corollary}\label{Hausdorff} 
The interval topology on $2^\omega$ is
Hausdorff.\end{corollary}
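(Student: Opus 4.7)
The plan is to combine the preceding Lemma with a direct inspection of the interval topology on the two-element chain $2 = \{0,1\}$, and then appeal to the standard fact that a product of Hausdorff spaces is Hausdorff.

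First I would compute $\tau_i(2)$ explicitly from the definition. Since $\downarrow 0 = \{0\}$, $\uparrow 1 = \{1\}$, and $\downarrow 1 = \uparrow 0 = 2$, the subbase $\mathcal{S}$ for the interval topology on $2$ consists of $\emptyset$, $\{0\}$, and $\{1\}$. Hence $\tau_i(2)$ is the discrete topology on $2$, and in particular $(2,\tau_i(2))$ is Hausdorff (indeed $T_1$ is enough for a two-point space to be discrete and hence Hausdorff).

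Next I would apply the preceding Lemma with $K = \omega$ and each $P_k = 2$. By that Lemma, $\tau_i(2^\omega)$ coincides with the product topology on $\prod_{k\in\omega}(2,\tau_i(2))$. Since an arbitrary product of Hausdorff spaces is Hausdorff (given two distinct points, they must differ in some coordinate $j$, and disjoint open neighborhoods in the $j$-th factor pull back under $\pi_j$ to disjoint open neighborhoods in the product), we conclude that $\tau_i(2^\omega)$ is Hausdorff.

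There is essentially no obstacle here: the content of the corollary has been shifted entirely onto the Lemma. The only thing to check is that $\tau_i(2)$ is the discrete topology, which is immediate from the definition of the interval topology. I would therefore expect the written proof to be only a few lines: identify the interval topology on the two-element factor, invoke the Lemma to rewrite $\tau_i(2^\omega)$ as the corresponding product topology, and cite the general fact about products of Hausdorff spaces.
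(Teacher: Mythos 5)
Your proof is correct and follows exactly the paper's own argument: invoke the preceding Lemma to identify $\tau_i(2^\omega)$ with the product of the interval topologies on the factors, observe that $\tau_i(2)$ is discrete (hence Hausdorff), and cite the fact that a product of Hausdorff spaces is Hausdorff. The only difference is that you spell out the computation of $\tau_i(2)$, which the paper states without proof.
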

\begin{proof}The lemma shows that the interval
topology is just the product topology
of the (discrete) Hausdorff topology on $2=\{0,1\}$,
and the product topology of Hausdorff spaces
is always Hausdorff.\end{proof}

\begin{lemma}\label{infbreadth}
The complete lattice $2^\omega$ has infinite breadth.
\end{lemma}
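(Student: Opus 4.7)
The plan is to show that for every positive integer $n$, the breadth of $2^{\omega}$ is not $\leq n$, by producing a finite witness set $F \subseteq 2^{\omega}$ of cardinality $n+1$ such that no proper subset of $F$ has the same infimum as $F$. Since the order on $2^{\omega}$ is pointwise, infima are computed coordinatewise (pointwise minima), which makes the construction very concrete.

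Fix $n \geq 1$ and, for each $i \in \{0,1,\ldots,n\}$, define $f_i : \omega \to 2$ by $f_i(i) = 0$ and $f_i(j) = 1$ for all $j \neq i$. Let $F = \{f_0, f_1, \ldots, f_n\}$, so $|F| = n+1$. I would first compute $\inf(F)$ pointwise: for each coordinate $j \in \omega$, we have $\inf(F)(j) = 0$ iff $f_i(j) = 0$ for some $i$, iff $j \in \{0,1,\ldots,n\}$. Thus $\inf(F)$ is the characteristic function of $\omega \setminus \{0,1,\ldots,n\}$.

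Next I would argue that no subset $A \subseteq F$ with $|A| \leq n$ has $\inf(A) = \inf(F)$. Since $|F| = n+1 > n \geq |A|$, there exists $k \in \{0,1,\ldots,n\}$ with $f_k \notin A$. For every $f_i \in A$ we have $i \neq k$, hence $f_i(k) = 1$; therefore $\inf(A)(k) = 1$. But $\inf(F)(k) = 0$, so $\inf(A) \neq \inf(F)$. This shows that the breadth of $2^{\omega}$ is not $\leq n$.

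Since $n$ was arbitrary, $2^{\omega}$ fails to have breadth $\leq n$ for any $n \in \mathbb{N}$, so by definition it has infinite breadth. There is no real obstacle here: the argument is essentially a counting observation combined with the coordinatewise nature of the order, and the only thing to be careful about is choosing the witness family large enough (size $n+1$) so that any subset of size $\leq n$ must miss some $f_k$, giving the distinguishing coordinate.
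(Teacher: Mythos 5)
Your proposal is correct and follows essentially the same route as the paper: the same witness family (functions vanishing at exactly one coordinate $i \le n$), the same set $F$ of size $n+1$, and the same coordinatewise computation of the infimum. You in fact spell out the final pigeonhole step (a subset of size $\le n$ must omit some $f_k$, so its infimum is $1$ at coordinate $k$) more explicitly than the paper does.
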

\begin{proof} For $m\in\omega$ we let $e_m:\omega \to 2=\{0,1\}$
be the function where $e_m(m) = 0$ and $e_m(k) = 1$
for $m\neq k$.

In order to show that for any positive $n\in\mathbb{N}$
the complete lattice $2^\omega$ does not have
breadth $\leq n$, we consider the finite set $$F = \{e_0, \ldots, e_n\}.$$

So $\inf(F)\in 2^\omega$ is the function $r:\omega\to 2$
such that $r(k) = 0$ for $k \leq n$ and $r(k) = 1$ otherwise.

Note that $F$ has $n+1$ elements, and that
for no subset of $A\subseteq F$ with $A\neq F$
do we have $\inf(A) = \inf(F)$.
\end{proof}

So corollary \ref{Hausdorff} and lemma \ref{infbreadth}
answer question 87.

{\footnotesize

} 
\end{document}